\newtheorem{thm}{Theorem}[section]
\newtheorem{lemma}[thm]{Lemma}
\theoremstyle{remark}
\theoremstyle{definition}
\numberwithin{equation}{thm}
\newcommand{\newpara}{\noindent\refstepcounter{thm}{\bf(\thethm)\;}}
\newcommand{\N}{\mathbb N}
\newcommand{\Z}{\mathbb Z}
\newcommand{\al}{\alpha}
\newcommand{\car}{\mathrm{char}}
\newcommand{\ind}{\mathrm{ind}}
\newcommand{\per}{\mathrm{per}}
\newcommand{\sd}{\mathrm{sd}}
\newcommand{\cd}{\mathrm{cd}}
\newcommand{\Br}{\mathrm{Br}}
\newcommand{\Nrd}{\mathrm{Nrd}}
\newcommand{\lra}{\longrightarrow}
\newcommand{\ov}[1]{\overline{#1}}
\newcommand{\simto}{\xrightarrow{\sim}}
\begin{document}
\title{\textbf{Reduced norms of division algebras over complete discrete valuation fields of local-global type}}
\author{Yong HU\footnote{supported by a grant from the National Natural Science Foundation of China (Project No. 11801260).}}

\maketitle

\begin{abstract}
Let $F$ be a complete discrete valuation field whose residue field $k$ is a global field of positive characteristic $p$. Let $D$ be a central division $F$-algebra of $p$-power degree. We prove that the subgroup of $F^*$ consisting of reduced norms of $D$ is exactly the kernel of the cup product map $\lambda\in F^*\mapsto (D)\cup(\lambda)\in  H^3(F,\,\mathbb{Q}_{p}/\Z_{p}(2))$, if either $D$ is tamely ramified or of period $p$. This gives a $p$-torsion counterpart of a recent theorem of Parimala, Preeti and Surech, where the same result is proved  for division algebras of prime-to-$p$ degree.
\end{abstract}



\noindent {\bf Key words:} Reduced norms, Rost invariant, division algebras over valuation fields, Galois cohomology

\medskip

\noindent {\bf MSC classification:} 11E72, 17A35, 11R52, 16K50


\section{Introduction}

 Let $F$ be  a field and let $D$ be a central division algebra of degree $n$ over $F$. First assume $n$ is not divisible by the characteristic of $F$. Using Kummer theory we may consider the Brauer class $(D)$ of $D$ as an element of the Galois cohomomology group $H^2(F,\,\Z/n(1)):=H^2(F,\,\mu_n)$; and the isomorphism $F^*/F^{*n}\cong H^1(F,\,\mu_n)$ gives rise to a natural map $F^*\to H^1(F,\,\mu_n):\;\lambda\mapsto (\lambda)$. The cup product map
 \[
 F^*\lra H^3(F,\,\Z/n(2)):=H^3(F,\,\mu_n^{\otimes 2})\;;\quad\lambda\longmapsto (D)\cup (\lambda)\,
 \]vanishes on the group $\Nrd(D^*)$ of (nonzero) reduced norms of $D$, by a norm principle for reduced norms (\cite[Prop.\;2.6.8]{GilleSzamuely17}) and the projection formula in Galois cohomology. So we have an induced homomorphism
 \[
\mathcal{R}_D\,:\;\; F^*/\Nrd(D^*)\lra H^3(F,\,\Z/n(2))\;;\quad\lambda\longmapsto (D)\cup (\lambda)\,,
\]which we call the \emph{Rost invariant} of $D$. (This is the Rost invariant of the semisimple simply connected group $\mathbf{SL}_1(D)$. See e.g. \cite{Merkurjev03inGMScohinv}.)

By well known theorems of Merkurjev and Suslin (\cite[Theorem\;12.2]{MS82NormResidue}, \cite[Theorems\;24.4 and 24.8]{Suslin85}), if $n=\deg(D)$ is square-free or the cohomological $\ell$-dimension of $F$ is $\le 2$ for all prime divisors $\ell$ of $n$, then the Rost invariant $\mathcal{R}_D$ is injective. A recent work of Parimala, Preeti and Suresh \cite{ParimalaPreetiSuresh2018} proves this Rost injectivity in a remarkable new case: If $F$ is a one-variable function field over a (non-archimedean) local field $K$ and if $n=\deg(D)$ is coprime to the residue characteristic of $K$, then $\mathcal{R}_D$ is injective. Their proof relies heavily on patching techniques developed by Harbater, Hartmann and Krashen (\cite{HHK}, \cite{HH}, \cite{HHK15AJM}, \cite{HHK15IMRN}, etc.), and it has been an important intermediate step to analyze the situation over the completion of $F$ at a divisorial valuation $v$. The residue field of such a (discrete) valuation is a global field of positive characteristic $p$ or a local field with residue characteristic $p$. For division algebras of prime-to-$p$ degree, the Rost injectivity over the completion $F_v$ was proved in \cite[Thm.\;4.12]{ParimalaPreetiSuresh2018}.

We note that if $F$ is a 2-local field, i.e., a complete discrete valuation field whose residue field $k$ is a (non-archimedean) local field (with finite residue field) in the usual sense, then the Rost invariant $\mathcal{R}_D$ is injective for all division algebras $D$ over $F$ (including the case where $\deg(D)$ is divisible by the residue characteristic of $k$). This was a consequence of Kato's work on two-dimensional local class field theory (\cite[p.657, $\S$3.1, Thm.\;3 (2)]{KatoII80}). So, it is natural to ask whether the same is true when the residue field $k$ is global.

In this paper we prove the following:

\begin{thm}\label{1p1}
  Let $F$ be a complete discrete valuation field whose residue field $k$ is a global field of characteristic $p>0$. Let $D$ be a central division algebra of $p$-power degree over $F$.

  Then the Rost invariant $\mathcal{R}_D$ is injective in each of the following cases:

  \begin{enumerate}
    \item $D$ is tamely ramified, i.e., $D$ splits over the maximal unramified extension of $F$;
    \item the period of $D$ is $p$.
  \end{enumerate}
\end{thm}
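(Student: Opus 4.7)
The plan is to translate the hypothesis $(D)\cup(\lambda)=0$ in $H^3(F,\Q_p/\Z_p(2))$ into conditions on the residue field $k$ via Kato's residue maps, then exploit the fact that $k$ is a global field of positive characteristic to conclude. The two key inputs from $k$ are: (i) period equals index in $\Br(k)$ by the Brauer--Hasse--Noether theorem for global function fields, and (ii) the Hasse--Schilling--Maass theorem, which gives $\Nrd(A^*)=k^*$ for every central simple $k$-algebra $A$ (Eichler's condition is automatic because $k$ has no real places). Combined with a Hensel-type lifting of reduced norms from $\overline{D}$ back up to $D$, these ingredients should account for everything not obstructed by the residue map.

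For Case (1), tameness combined with $\deg D=p^r$ forces the class $(D)\in\Br(F)\{p\}$ to be described by unramified-plus-tame-residue data over $k$; concretely, one expects to express $(D)$ in terms of a residue Brauer class $(\overline{D})\in\Br(k)\{p\}$ (and possibly a tame character in $H^1(k,\Q_p/\Z_p)$). Taking the residue of $(D)\cup(\lambda)=0$ should yield the constraint $v(\lambda)\cdot(\overline{D})=0$ in $\Br(k)$; because $(\overline{D})$ has period $p^r$ by period--index equality over the global field $k$, this forces $p^r\mid v(\lambda)$. Since $\Nrd$ sends a uniformizer of the maximal order of $D$ to $\pi^{p^r}$, this allows a reduction to the case $\lambda\in\mathcal{O}_F^*$. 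Then Hasse--Schilling--Maass applied to $\overline{D}/k$ yields $\overline{\lambda}\in\Nrd(\overline{D}^*)$; a lift via the Azumaya structure on the order of $D$, combined with a Hensel argument absorbing the principal units $1+\mathfrak{m}$, should complete this case.

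Case (2) is where I expect the main obstacle to lie: a period-$p$ division algebra need not be tame, so the clean residue reduction of Case (1) breaks down. The strategy is to use the period-$p$ hypothesis to secure an explicit structural presentation of $D$. In equal characteristic $p$, Albert's theorem gives $\ind(D)=p$ and realizes $D$ as an Artin--Schreier cyclic $p$-algebra of the form $[a,b)$; then both $(D)\cup(\lambda)$ and $\Nrd(D^*)$ admit concrete symbol-level descriptions that can be compared directly, letting one exhibit $\lambda$ as a norm from the cyclic splitting field of $D$. In mixed characteristic, one would either lift the equal-characteristic analysis through the Witt-vector formalism or run a symbol-theoretic descent argument using the period-$p$ constraint. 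In either scenario the hard work is controlling the interaction between the wild ramification of $D$ and the principal-unit filtration on $F^*$, and ensuring that the cohomological vanishing on $F$ descends to a reduced-norm statement that can be handled on the global residue field $k$.
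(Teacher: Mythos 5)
Your reduction in Case (1) breaks down at the first residue computation. Writing $\alpha=(D)$, tameness gives a decomposition $\alpha=\alpha'+(E/F,\pi)$ with $\alpha'$ unramified and $E/F$ unramified cyclic; writing $\lambda=\theta(-\pi)^s$ with $s=v(\lambda)$ and $\theta$ a unit, the vanishing of $\partial(\alpha\cup(\lambda))$ yields $s\bar\alpha'=(E_0/k,\bar\theta)$ in $\Br(k)$ --- not $s\cdot(\overline{D})=0$. Since $(E_0/k,\bar\theta)$ need not vanish, you cannot conclude $p^r\mid v(\lambda)$, and your reduction to $\lambda\in U_F$ followed by Hasse--Schilling--Maass plus a Hensel lift only treats the unramified case $E=F$ (where also your claim about the valuation of $\Nrd$ on a uniformizer of $D$ holds; for ramified tame $D$ the value group of $v\circ\Nrd$ is strictly larger than $\deg(D)\cdot\Z$). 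For a genuinely ramified tame class the paper instead runs an induction on $\ind(\alpha)$ via ``inductive pairs'': a degree-$p$ separable extension $L/F$ and $\mu\in L^*$ with $N_{L/F}(\mu)=\lambda$, $\alpha_L\cup(\mu)=0$ and $\ind(\alpha_L)<\ind(\alpha)$. Constructing such a pair is where all the global arithmetic of $k$ enters (Chebotarev density, strong approximation, period $=$ index and the local-global principle for $\Br(k)$, injectivity of corestriction for local fields), and none of that mechanism is present in your sketch; Hasse--Schilling--Maass alone does not produce the index-dropping extension.

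Case (2) is in fact the easy case and you have the difficulty inverted. By a lemma of Kato, over such an $F$ a period-$p$ class that is \emph{not} tamely ramified must have degree $p$, so the Merkurjev--Suslin theorem (or Gille's $p$-primary analogue in characteristic $p$) gives injectivity of $\mathcal{R}_D$ outright; the tame period-$p$ case is subsumed in Case (1). Your alternative route has two concrete problems: Albert's theorem does not give $\ind(D)=p$ for an arbitrary period-$p$ algebra in characteristic $p$ (period need not equal index for $p$-algebras over general fields --- the equality here comes from the specific two-dimensional nature of $F$), and even granting degree $p$, ``comparing symbol-level descriptions'' of $(D)\cup(\lambda)$ and $\Nrd(D^*)$ to exhibit $\lambda$ as a reduced norm is precisely the content of the Merkurjev--Suslin theorem, not something one can do by direct computation.
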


After some reviews of preliminary facts in $\S$\ref{sec2}, we will prove the theorem in $\S$\ref{sec3} (see \eqref{3p6}) admitting some technical results whose proofs occupy the whole $\S$\ref{sec4}.

\medskip

A few remarks about the theorem are worth mentioning at this moment.

First, in the theorem the field $F$ can have characteristic 0 or $p$. In fact, for any field $K$ of characteristic $p$ and any $p$-power $n$, the groups $H^{r+1}(K,\,\Z/n(r)),\,r\in\N$ can be defined in a suitable way so that they have almost all the properties the Galois cohomology groups $H^{r+1}(K,\,\mu_n^{\otimes r})$ have in the case $p\nmid n$ (or at least, all the properties we need in this paper remain valid as in the case $p\nmid n$). In $\S$\ref{sec2} we will recall some most useful facts in this respect and we refer the reader to \cite[$\S$3.2]{KatoII80} and \cite[Appendix A]{GMS} for more details. In particular, $H^2(K,\,\Z/n(1))$ can be identified with the $n$-torsion subgroup $\Br(K)[n]$ of the Brauer group of $K$ and  ``cup product'' maps
\[
H^{r+1}(K,\,\Z/n(r))\times (K^*/K^{*n})\longrightarrow H^{r+1}(K,\,\Z/n(r+1))
\]can be defined and have similar properties. So the Rost invariant map $\mathcal{R}_D$ in Theorem\;\ref{1p1} can be defined in the same way as before.

Secondly, for the field $F$ in Theorem\;\ref{1p1}, if $D$ has period $p$ and is not tamely ramified, then by \cite[p.337, $\S$3, Lemma\;5]{KatoI79}, the degree of $D$ must be $p$. Therefore, the result in this case follows directly form the Merkurjev--Suslin theorem \cite[Thm.\;12.2]{MS82NormResidue} or its $p$-primary counterpart \cite[p.94, Thm.\;6]{Gille00}. So we need only to consider the first case of the theorem.

Still another remark is that in the theorem we may have only assumed $F$ henselian and excellent instead of the completeness assumption. In fact, letting $\hat{F}$ denote the completion of $F$ and $\hat{D}=D\otimes_F\hat{F}$, the natural map
\[
H^1(F,\,\mathbf{SL}_1(D))=F^*/\Nrd(D^*)\lra H^1(\hat{F}\,,\,\mathbf{SL}_1(D))=\hat{F}^*/\Nrd(\hat{D}^*)
\]is injective by Greenberg's theorem \cite{Greenberg66}. Therefore, the injectivity of $\mathcal{R}_{\hat{D}}$ implies that of $\mathcal{R}_D$.

\medskip

We expect that the theorem remain true without any restriction on $D$ and that the same result can be shown for function fields of curves over local fields, thereby extending \cite[Thm.\;1.1]{ParimalaPreetiSuresh2018} to the case of $p$-power degree algebras.

\section{Kato--Milne cohomology and Brauer groups in characteristic $p$}\label{sec2}

Fix a prime number $p$ and a field $k$ of characteristic $p$.

We need to use the Kato--Milne cohomology groups $H^{r+1}(k,\,\Z/p^m(r)),\,r\in\N,\,m\in\N^*$, which can be viewed as the $p$-primary counterpart of the Galois cohomology groups $H^{r+1}(k,\,\mu_n^{\otimes r}),\,r\in\N$ with $n$ coprime to $p$. It does not seem to be absolutely necessary to give the precise definition oof the Kato--Milne cohomology here. The interested readers are referred to \cite{KatoII80} and \cite{Milne76AnnSciENS}, or for another approach, \cite[Appendix\;A]{GMS}. However, we do need to know some basic facts about the groups $H^{r+1}(k,\,\Z/p^m(r))$ which we shall now recall.

\

\newpara\label{2p1} Let $k$ be a field of characteristic $p$. For integers $r\ge 0$ and $m\ge 1$, the group $H^{r+1}(k,\,\Z/p^m(r))$ can be defined as in \cite[$\S$3.2]{KatoII80} or \cite[Appendix\;A]{GMS}.. The following statements hold:

\begin{enumerate}
  \item $H^1(k,\,\Z/p^m(0))$ coincides with the usual Galois cohomology group $H^1(k,\,\Z/p^m)$, with the Galois action on $\Z/p^m$ being trivial.
  \item $H^2(k,\,\Z/p^m(1))$ can be identified with the $p^m$-torsion subgroup $\Br(k)[p^m]$ of the Brauer group of $k$.
  \item We write
  \[
  \begin{split}
  H^{r+1}_{p^m}(k)&=H^{r+1}(k\,,\,\Z/p^m(r))\;,\\
  \text{and}\quad H^{r+1}(k)&=\varinjlim_{m}H^{r+1}_{p^m}(k)=\varinjlim_{m}H^{r+1}(k\,,\,\Z/p^m(r))\;.
  \end{split}
  \]For every $m\in\N^*$, the natural map $H^{r+1}_{p^m}(k)\to H^{r+1}(k)$ is injective and its image is the subgroup of $p^m$-torsion elements in $H^{r+1}(k)$.
  \item For every $q\in\N$, let $K_q$ be the functor that associates to each field its $q$-th Milnor $K$-group. Then there are natural pairings
  \[
  H^{r+1}_{p^m}(k)\times \left( K_q(k)/p^m\right)\lra H^{r+q+1}_{p^m}(k)\,,
  \]which we call \emph{cup products} and denote by $(\al,\,\lambda)\mapsto \al\cup \lambda$, by analogy to the prime-to-$p$ case. (In this paper, for any abelian group $A$ and any integer $n$, we write $A/n=A\otimes_{\Z}(\Z/n\Z)$.)
  \item If $L/k$ is a field extension, there are restriction maps
  \[
  \mathrm{Res}_{L/k}\;:\;\; H^{r+1}_{p^m}(k)\lra H^{r+1}_{p^m}(L)\;;\quad \theta\longmapsto \theta_L\,.
  \]
  If $L/k$ is a finite extension, we have corestriction maps
  \[
  \mathrm{Cor}_{L/k}\;:\;\; H^{r+1}_{p^m}(L)\lra H^{r+1}_{p^m}(k)\;.
  \]As in the prime-to-$p$ case, $\mathrm{Cor}_{L/k}\circ\mathrm{Res}_{L/k}$ is the multiplication by $[L:k]$. Moreover, the usual projection formulas hold:
  \[
  \begin{split}
    \mathrm{Cor}_{L/k}(\theta_L\cup \mu)&=\theta\cup N_{L/k}(\mu)\,,\quad\forall\;\theta\in H^{r+1}_{p^m}(k)\,,\;\forall\;\mu\in K_q(L)/p^m\,,\\
      \mathrm{Cor}_{L/k}(\theta'\cup \lambda_L)&=\mathrm{Cor}_{L/k}(\theta')\cup \lambda\,,\quad\forall\;\theta'\in H^{r+1}_{p^m}(L)\,,\;\forall\;\lambda\in K_q(k)/p^m\,.
  \end{split}
  \]Here for Milnor $K$-groups we denote by $N_{L/k}: K_q(L)\to K_q(k)$ the norm homomorphism.
\end{enumerate}

\newpara\label{2p2}
Now let $F$ be a henselian excellent discrete valuation field with residue field $k$ (of characteristic $p$). ($F$ may have characteristic $0$ or $p$.) For all $r\in\N$ and $m\in\N^*$ we define
\begin{equation}\label{2p2p1}
  H^{r+1}_{p^m}(F)_{tr}:=\ker\big(\mathrm{Res}\,:\; H^{r+1}_{p^m}(F)\lra H^{r+1}_{p^m}(F_{nr})\big)\,,
\end{equation}where $F_{nr}$ denotes the maximal unramified extension of $F$. There is a natural inflation map (cf. \cite[p.659, $\S$3.2, Definition\;2]{KatoII80})
\[
  \mathrm{Inf}\;:\;\; H^{r+1}_{p^m}(k)\lra H^{r+1}_{p^m}(F)
\]and the choice of a uniformizer $\pi\in F$ defines a homomorphism (cf. \cite[p.659, $\S$3.2, Lemma\;3]{KatoII80})
\[
h_{\pi}\,:\;\; H^r(k)\lra H^{r+1}(F)\;;\quad w\longmapsto \mathrm{Inf}(w)\cup (\pi)\,.
\](By convention, $H^r_{p^m}=0$ if $r=0$.) The images of $\mathrm{Inf}$ and $h_{\pi}$ are both contained in $H^{r+1}_p(F)_{tr}$. It is proved in \cite[p.219, Thm.\;3]{Kato82} that the above two maps induces an isomorphism
\begin{equation}\label{2p2p2}
  \mathrm{Inf}\oplus h_{\pi}\;:\;\; H^{r+1}_{p^m}(k)\oplus H^{r}_{p^m}(k)\simto H^{r+1}_p(F)_{tr}\;.
\end{equation}
We can thus define a \emph{residue map} $\partial:H^{r+1}_p(F)_{tr}\to H^r_p(k)$, which fits into the split exact sequence
\begin{equation}\label{2p2p3}
  0\lra H^{r+1}_{p^m}(k)\overset{\mathrm{Inf}}{\lra}H^{r+1}_{p^m}(F)_{tr}\overset{\partial}{\lra} H^r_{p^m}(k)\lra 0\,.
\end{equation}In this paper we are mostly interested in the residue maps defined on $H^2$ and $H^3$. We have the following formulae:
\begin{equation}\label{2p2p4}
  \partial(\mathrm{Inf}(\chi_0)\cup(\lambda))=v_F(\lambda).\chi_0\;\in\;H^1(k)\,,
\end{equation}
and
\begin{equation}\label{2p2p5}
\partial(\mathrm{Inf}(\chi_0)\cup(\lambda)\cup(\mu))=\chi_0\cup (-1)^{v(\lambda)v(\mu)}\ov{\lambda^{v(\mu)}\mu^{-v(\lambda)}}\;\in\;H^2(k)\,,
\end{equation}
for all  $\lambda,\,\mu\in F^*$ and $\chi_0\in H^1(k)$.

In terms of Brauer groups, the case $r=1$ of \eqref{2p2p3} can be interpreted as the exact sequence
\begin{equation}\label{2p2p6}
  0\lra \Br(k)[p^m]\overset{\mathrm{Inf}}{\lra} \Br_{tr}(F)[p^m]\overset{\partial}{\lra} H^1(k,\,\Z/p^m)\lra 0\,,
\end{equation}where
\[
\Br_{tr}(F):=\ker(\Br(F)\lra \Br(F_{nr}))\,
\]is called the \emph{tame} or \emph{tamely ramified}  part  of $\Br(F)$.

\

\newpara\label{2p3} For a field $k$ of characteristic $p$, there are two useful variants of the cohomological $p$-dimension $\cd_p(k)$: the \emph{separable $p$-dimension} $\sd_p(k)$ (\cite[p.62]{Gille00}) and \emph{Kato's $p$-dimension} $\dim_p(k)$ (\cite[p.220]{Kato82}). They are defined as follows:
\[
\begin{split}
\sd_p(k)&:=\inf\{r\in\N\,|\,H^{r+1}_p(k')=0\,, \;\forall\; \text{finite separable extension } k'/k\}\,,\\
\dim_p(k)&:=\inf\{r\in\N\,|\,[k:k^p]\le p^r\text{ and } H^{r+1}_p(k')=0\,, \;\forall\; \text{finite extension } k'/k\}\,.
\end{split}
\]
It is easy to see that $\sd_p(k)\le \dim_p(k)$. Moreover, it can be shown that
\[
\log_p[k:k^p]\le \dim_p(k)\le \log_p[k:k^p]+1\,.
\] Therefore, we have the following implications:
\[
\dim_p(k)\le 1 \Longrightarrow [k:k^p]\le p \Longrightarrow \dim_p(k)\le 2\Longrightarrow \sd_p(k)\le 2\,.
\]
Notice also that the condition $\sd_p(k)\le 1$ is equivalent to saying that $\Br(L)$ has no $p$-torsion for all algebraic extensions $L/k$ (cf. \cite[Chap.\;II, $\S$3.1]{SerCohGal94}).

If $k$ is a global function field of characteristic $p$, then $[k:k^p]=p$ and $\dim_p(k)=\sd_p(k)=2$.

For fields of characteristic different from $p$, both the separable $p$-dimension and Kato's $p$-dimension are defined to be the same as the cohomological $p$-dimension.

\section{Proof of main theorem}\label{sec3}

We now proceed to the proof of Theorem\;\ref{1p1}. We basically follow the same strategy as  Parimala, Preeti and Suresh's paper \cite{ParimalaPreetiSuresh2018}. Our main contribution is the extension of some key lemmas in \cite{ParimalaPreetiSuresh2018} to the characteristic $p$ case. At some points the proofs of our generalized versions involve some subtleties and have to be treated with special care.

\medskip

\newpara\label{3p1} Let us fix the following notation for the whole section:

\begin{enumerate}
  \item For each $r\in\N$ we write $H^{r+1}(\cdot)=\varinjlim_{m}H^{r+1}(\cdot\,,\,\Z/p^m(r))$ as in $\S$\ref{sec2}.

  Here $H^{r+1}(\cdot,\,\Z/p^m(r))$ is the Kato--Milne cohomology in characteristic $p$ and is the Galois cohomology $H^{r+1}(\cdot,\,\mu_{p^m}^{\otimes r})$ in characteristic different from $p$.
  \item Let $F$ be a complete discrete valuation field with residue field $k$.

  We assume that $\car(k)=p>0$.
  \item Let $v=v_F$ denote the normalized discrete valuation on $F$ and fix a uniformizer $\pi\in F$.
  \item If $L/F$ is a finite extension, we put
  \[
  \begin{split}
    v_L&=\text{the normalized discrete valuation on } L\,,\\
    U_L&=\{x\in L\,|\,v_L(x)=0\}\,,\\
    \text{and }\;\; U^n_L&=\{x\in L\,|\,v_L(x-1)\ge n\}\,,\quad\forall\; n\ge 1\,.
  \end{split}
  \]
  \item Let $0\neq\al\in \Br(F)[p^{\infty}]$ be a nonzero Brauer class of $p$-power index over $F$.

  We assume that $\al$ is tamely ramified, i.e., $\al\in\Br_{tr}(F)=\ker(\Br(F)\to\Br(F_{nr}))$.
  \item Let $\chi_0=\partial(\al)\in H^1(k)$ be the residue of $\al$, which is well defined according to the tameness assumption on $\al$ \eqref{2p2p3}.

The character $\chi_0 \in H^1(k)$ can be determined by a pair $(E_0/k,\,\ov{\sigma})$, where $E_0/k$ is the cyclic extension and $\ov{\sigma}$ is a  generator of the cyclic Galois group $\mathrm{Gal}(E_0/k)$. The correspondence between $\chi_0$ and $(E_0/k,\,\ov{\sigma})$ is established by requiring  that the continuous homomorphism $\chi_0:\mathrm{Gal}(k_s/k)\to \mathbb{Q}_{p}/\Z_{p}$ has kernel $\mathrm{Gal}(k_s/E_0)$, $k_s$ denoting a fixed separable closure of $k$, and that $\ov{\sigma}\in \mathrm{Gal}(E_0/k)$ is the generator which is mapped to the canonical generator of the cyclic group $\mathrm{Im}(\chi_0)$. Since the role played by $\ov{\sigma}$ is almost never explicit in our arguments, we will simply write $\chi_0=(E_0/k)$.

By the \emph{canonical lifting} $\chi\in H^1(F)$ of $\chi_0$ we shall mean the image of $\chi_0$ under the inflation map $H^1(k)\to H^1(F)$. Explicitly, $\chi$ is defined by the pair $(E/F,\,\sigma)$ where $E/F$ is the unramified extension with residue field extension $E_0/k$ and $\sigma\in\mathrm{Gal}(E/F)$ is the generator corresponding to $\ov{\sigma}$ via the natural isomorphism $\mathrm{Gal}(E/F)\simto \mathrm{Gal}(E_0/k)$. Just as for $\chi_0$, we will write $\chi=(E/F)$ for short. For any element $b\in F^*$, we write
\[
(E/F,\,b)=\chi\cup(b)\;\in\; H^2(F)=\Br(F)[p^{\infty}]\,.
\]
  \item As in \cite[Lemma\;4.1]{ParimalaPreetiSuresh2018}, we may write
\begin{equation}\label{3p1p1}
  \al=\al'+(E/F\,,\,\pi)\;\in\;\Br(F)\quad\text{ with }\;\al'\in \Br(F) \text{ unramified}\,.
\end{equation}
Here we call a Brauer class $\beta\in\Br(F)[p^{\infty}]$ \emph{unramified} if $\beta\in \Br_{tr}(F)$ and if $\partial(\beta)=0$. Equivalently, an unramified element of $\Br(F)[p^{\infty}]$ is an element in the image of the inflation map $\Br(k)[p^{\infty}]\to \Br_{tr}(F)[p^{\infty}]$ \eqref{2p2p6}.

We denote by $\bar\al'\in\Br(k)[p^{\infty}]$ the element that is mapped to  $\al'\in\Br(F)$ under the inflation map.
\item Let $\lambda\in F^*$ be such that $\al\cup(\lambda)=0\in H^3(F)$. We write
\begin{equation}\label{3p1p2}
  \lambda=\theta.(-\pi)^s\quad \text{with }\; s=v_F(\lambda)\in\Z \text{ and } \theta \in U_F\,.
\end{equation}Computing the residue of $\al\cup(\lambda)$ we see that
\begin{equation}\label{3p1p3}
s\al'=(E/F\,,\,\theta)\quad\text{and}\quad s\al=(E/F\,,\,(-1)^s\lambda)\;
\end{equation}(cf. \cite[Lemma\;4.7]{ParimalaPreetiSuresh2018}).
\end{enumerate}

To prove our main theorem, we need to show $\lambda$ is a reduced norm for $\al$ when $k$ is a global function field.

\

\newpara\label{3p2} With notation as above, by an \emph{inductive pair} for $(\al,\,\lambda)$ we mean a pair $(L,\,\mu)$ consisting of a separable field extension $L/F$ of degree $p$ and an element $\mu\in L^*$ such that $N_{L/F}(\mu)=\lambda$, the index $\ind(\al_L)$ of $\al_L$ is strictly smaller than $\ind(\al)$, and $\al_L\cup(\mu)=0\in H^3(L)$.

By the norm principle for reduced norms, we may use induction on the index of $\al$ to conclude that $\lambda$ is a reduced norm of $\al$, as soon as an inductive pair exists.

\begin{lemma}\label{3p3}
  With notation and hypotheses as above, suppose that $s=v_F(\lambda)$ is coprime to $p$. Let $L=F(\sqrt[p]{-\lambda})$ and $\mu=-\sqrt[p]{-\lambda}$.

  Then $\ind(\al_L)<\ind(\al)$ and $\partial(\al_L\cup(\mu))=0$. Therefore, if $\sd_p(k)\le 2$ (e.g., $k$ is a global function field), then $(L,\,\mu)$ is an inductive pair for $(\al,\,\lambda)$.
\end{lemma}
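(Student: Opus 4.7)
The plan is to verify in turn that $[L:F]=p$, that $N_{L/F}(\mu)=\lambda$, that $\ind(\al_L)<\ind(\al)$, and (under the hypothesis $\sd_p(k)\le 2$) that $\al_L\cup(\mu)=0$. Since $v_F(-\lambda)=s$ is prime to $p$, the polynomial $X^p-(-1)^{p+1}\lambda$ is Eisenstein at $\pi$, so $[L:F]=p$, the extension $L/F$ is totally ramified with residue field $k$, and $v_L(\mu)=s$. A direct computation from this minimal polynomial gives $N_{L/F}(\mu)=(-1)^{p+1}\mu^p=\lambda$.

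For the index inequality, I would restrict the relation $s\al=(E/F,(-1)^s\lambda)$ of \eqref{3p1p3} to $L$. Using $\mu^p=(-1)^{p+1}\lambda$ gives $(-1)^s\lambda=(-1)^{s+p+1}\mu^p$, and the scalar $(-1)^{s+p+1}$ is a $p$-th power in $F$ in every case: for $p$ odd this is automatic because $-1=(-1)^p$, and for $p=2$ the coprimality $\gcd(s,p)=1$ forces $s$ odd so that $s+p+1$ is even. Writing $(-1)^s\lambda=\nu^p$ for a suitable $\nu\in L^*$ differing from $\mu$ only by a sign whose cup product with $\chi_L$ vanishes, one obtains
\[
s\al_L=\chi_L\cup(\nu^p)=p\bigl(\chi_L\cup(\mu)\bigr)=(p\chi_L)\cup(\mu)\,.
\]
Set $p^a:=\mathrm{ord}(\chi_0)$; here $a\ge 1$, since otherwise $(E/F,(-1)^s\lambda)=0$ would force the $p$-primary class $\al$ to vanish via $\gcd(s,p)=1$. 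The character $p\chi_L$ cuts out the degree-$p^{a-1}$ subextension of $E_L/L$, so $s\al_L$ is represented by a cyclic algebra of degree $p^{a-1}$ and $\ind(\al_L)=\ind(s\al_L)$ divides $p^{a-1}$. Combined with $\ind(\al)\ge\per(\al)\ge\mathrm{ord}(\chi_0)=p^a$, this yields the strict inequality.

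The residue $\partial(\al_L\cup(\mu))=0$ splits along $\al=\al'+(E/F,\pi)$. The unramified summand $\al'_L\cup(\mu)$ residues to $v_L(\mu)\ov{\al'}=s\ov{\al'}$. For the ramified summand, formula \eqref{2p2p5} with $v_L(\pi)=p$ and $v_L(\mu)=s$, together with the identity $\pi^s\mu^{-p}=(-1)^{p+1+s}\theta^{-1}$ (derived from $\mu^p=(-1)^{p+1}\lambda$ and $\lambda=\theta(-\pi)^s$), yields $\chi_0\cup\bigl((-1)^{ps+p+s+1}\ov{\theta}^{-1}\bigr)$; the sign is again trivial in $k^*/k^{*p}$ by the same parity analysis. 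The remaining combination $s\ov{\al'}-\chi_0\cup\ov{\theta}$ vanishes by inflating the other half of \eqref{3p1p3}, namely $s\al'=(E/F,\theta)$, down to $\Br(k)$.

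Finally, $\al_L$ is tame over $L$ because $\al$ splits over $F_{nr}$ and $L\cdot F_{nr}\subseteq L_{nr}$, hence $\al_L\cup(\mu)$ lies in $H^3(L)_{tr}$. When $\sd_p(k)\le 2$, the group $H^3_{p^m}(k)$ in the exact sequence \eqref{2p2p3} vanishes and $\partial$ becomes injective on $H^3_{p^m}(L)_{tr}$, so the residue computation forces $\al_L\cup(\mu)=0$. I expect the main delicate point to be the bookkeeping with the sign $(-1)^{s+p+1}$, particularly in characteristic $2$ where the hypothesis $\gcd(s,p)=1$ is used essentially, together with the index-reduction step that extracts a cyclic algebra of degree $p^{a-1}$ from the relation in \eqref{3p1p3}.
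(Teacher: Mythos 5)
Your argument is correct and is exactly the route the paper intends: the paper's own ``proof'' of Lemma~\ref{3p3} is a one-line deferral to \cite[Lemma\;4.7]{ParimalaPreetiSuresh2018}, and you have reconstructed that argument in full (index drop via $s\al_L=(p\chi_L)\cup(\mu)$ extracted from \eqref{3p1p3}, the residue computation via \eqref{2p2p5} together with $s\al'=(E/F,\theta)$, and injectivity of $\partial$ on $H^3_{p^m}(L)_{tr}$ when $H^3_{p^m}(k)=0$). The one slip is the claim that $X^p-(-1)^{p+1}\lambda$ is Eisenstein at $\pi$ --- that is true only when $s=1$; the facts you actually use ($[L:F]=p$, total ramification, $v_L(\mu)=s$) follow instead from $\gcd(v_F(-\lambda),p)=1$, which forces the extension of $v_F$ to take the non-integral value $s/p$ at $\sqrt[p]{-\lambda}$.
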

\begin{proof}
  The proof can be done in the same manner as in the prime-to-$p$ case, for which the reader is referred to \cite[Lemma\;4.7]{ParimalaPreetiSuresh2018}.
\end{proof}

Thanks to the above lemma, we may assume $p\,|\,s$. In the rest of this section, we write $s=rp$, so that
\[
\lambda=\theta(-\pi)^{rp}\quad\text{and}\quad  rp\al'=(E/F,\,\theta)
\]according to \eqref{3p1p2} and \eqref{3p1p3}.

\begin{lemma}\label{3p4}
  With notation and hypotheses as above (in particular $s=v_F(\lambda)=rp$), suppose that $L_0/k$ is a separable field extension of degree $p$ and that $\xi_0\in L_0^*$ satisfies
  \[
  N_{L_0/k}(\xi_0)=\bar\theta\quad\text{and}\quad r\bar\al'_{L_0}=(E_0L_0/L_0,\,\xi_0)\,.
  \]Let $L/F$ be the unramified extension with residue field $L_0/k$.

  Then there exists an element $\xi\in U_L$ such that
  \begin{enumerate}
    \item $\bar{\xi}=\xi_0$ in $L_0$;
    \item $N_{L/F}(\xi)=\theta$;
    \item $\partial(\al_L\cup(\xi.(-\pi)^r))=0$. (Thus, if $\sd_p(k)\le 2$, we have $\al_L\cup(\xi.(-\pi)^r)=0$.)
  \end{enumerate}
\end{lemma}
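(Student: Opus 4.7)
The plan splits into two parts: (a) construct $\xi \in U_L$ satisfying (1) and (2), and (b) verify (3), which depends only on $\bar\xi = \xi_0$ and hence holds for any unit lift of $\xi_0$.

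For (a), pick any unit lift $\xi' \in U_L$ of $\xi_0$; then $N_{L/F}(\xi')$ reduces to $N_{L_0/k}(\xi_0) = \bar\theta$ in $k$, so $N_{L/F}(\xi') \in \theta \cdot U_F^1$. The key input is the surjectivity of $N_{L/F}\colon U_L^1 \to U_F^1$: since $L/F$ is unramified, $\pi$ remains a uniformizer of $L$ and $N_{L/F}(1 + \pi^n x) \equiv 1 + \pi^n\,\mathrm{Tr}_{L_0/k}(\bar x) \pmod{\pi^{n+1}}$ for all $n \geq 1$; surjectivity of $\mathrm{Tr}_{L_0/k}$ (since $L_0/k$ is separable of degree $p$) together with completeness of $F$ then yields the claim by successive approximation. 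Setting $\xi = \xi'/\eta$ for $\eta \in U_L^1$ with $N_{L/F}(\eta) = N_{L/F}(\xi')/\theta$ produces the desired $\xi$.

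For (b), substitute the decomposition $\al_L = \mathrm{Inf}(\bar\al'_{L_0}) + \chi_L \cup (\pi)$ from \eqref{3p1p1}, where $\chi_L = \mathrm{Inf}(\chi_{0,L_0})$ is the canonical lift (as $L/F$ is unramified), and expand $\al_L \cup (\xi(-\pi)^r) = \al_L \cup (\xi) + r\,\al_L \cup (-\pi)$. The term $\chi_L \cup (\pi) \cup (-\pi)$ vanishes by the Steinberg relation $(\pi,-\pi) = 0$ in $K_2(L)$. Writing $\bar\al'_{L_0} = \sum_i \chi_i \cup (a_i)$ as a sum of symbols with $\chi_i \in H^1(L_0)$ and $a_i \in L_0^*$ (each $a_i$ lifted to a unit $\tilde a_i \in U_L$) and applying \eqref{2p2p5} term by term, one finds $\partial(\mathrm{Inf}(\bar\al'_{L_0}) \cup (\xi)) = 0$, $\partial(r\,\mathrm{Inf}(\bar\al'_{L_0}) \cup (-\pi)) = r\,\bar\al'_{L_0}$, and $\partial(\chi_L \cup (\pi) \cup (\xi)) = \chi_{0,L_0} \cup \xi_0^{-1} = -(E_0 L_0/L_0,\,\xi_0)$. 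Summing and invoking the hypothesis $r\,\bar\al'_{L_0} = (E_0 L_0/L_0,\,\xi_0)$ yields $\partial(\al_L \cup (\xi(-\pi)^r)) = 0$. Under $\sd_p(k) \leq 2$, devissage from the $p$-torsion case gives $H^3_{p^m}(L_0) = 0$ for all $m$, so the residue map $\partial$ of \eqref{2p2p3} for $r=2$ is injective and the tame class $\al_L \cup (\xi(-\pi)^r)$ vanishes.

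The principal obstacle is the computation in (b): one must express $\bar\al'_{L_0}$ as an explicit symbol sum to bring \eqref{2p2p5} to bear and track carefully the signs coming from $(-1)^{v(\lambda)v(\mu)}$ and from inversion in $K_1$. Step (a) is routine Hensel-type work given surjectivity of the trace in separable extensions.
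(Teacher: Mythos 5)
Your proposal is correct and follows essentially the same route as the paper: condition (3) is reduced to condition (1) by a residue computation using the decomposition $\al=\al'+(E/F,\pi)$ and \eqref{2p2p5}, and then a unit $\xi$ satisfying (1) and (2) is produced from the surjectivity of $N_{L/F}\colon U^1_L\to U^1_F$ for the unramified extension $L/F$ (which the paper cites from Serre and you re-derive via the trace). The only cosmetic difference is that you expand $\bar\al'_{L_0}$ into symbols before applying \eqref{2p2p5} (harmless, since in Kato's definition every class of $H^2_{p^m}$ is a sum of symbols), whereas the paper computes the residue of $\al'_L\cup(\xi(-\pi)^r)$ directly.
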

\begin{proof}
  A direct calculation (using \eqref{2p2p5}) shows
  \[
  \begin{split}
    \partial_L(\al_L\cup(\xi.(-\pi)^r))&=r\bar\al'_{L_0}+\partial_L((E/F,\,\pi)_L\cup (\xi.(-\pi)^r))\\
    &=r\bar\al'_{L_0}-(E_0L_0/L_0,\,\bar\xi)\\
    &=(E_0L_0/L_0\,,\,\xi_0)-(E_0L_0/L_0,\,\bar\xi)\;.
  \end{split}
  \]So condition (3) is implied by condition (1). We need only to find a unit $\xi\in L$ satisfying (1) and (2).

  Since $L/F$ is an unramified extension, we have a commutative diagram with exact rows
  \begin{equation}\label{3p4p1}
    \begin{CD}
      0 @>>> U^1_L @>>> U_L @>>> L_0^* @>>> 0\\
      && @V{N_{L/F}}VV @VV{N_{L/F}}V @VV{N_{L_0/k}}V\\
        0 @>>> U^1_F @>>> U_F @>>> k^* @>>> 0
    \end{CD}
  \end{equation}
  and $N_{L/F}(U^1_L)=U^1_F$ by \cite[$\S$V.2, Prop.\;3]{SerreLocalFields}. In particular, the natural map $U_F\to k^*;\,x\mapsto \bar x$ induces an isomorphism
  \[
  w\,:\;\;U_F/N_{L/F}(U_L)\simto k^*/N_{L_0/k}(L_0^*)\,,
  \]and the induced map
  \[
  w'\;:\;\;\ker(N_{L/F}: U_L\to U_F)\lra \ker(N_{L_0/k}: L_0^*\lra k^*)
  \]is surjective. By assumption we have $\bar\theta\in N_{L_0/k}(L_0^*)$. So the injectivity of $w$ implies that $\theta=N_{L/F}(\xi_1)$ for some $\xi_1\in U_L$. Using the diagram \eqref{3p4p1} we find that $\xi.\bar\xi_1^{-1}\in L_0^*$ lies in $\ker(N_{L_0/k}: L_0^*\lra k^*)$. Now the surjectivity of $w'$ yields an element $z\in \ker(N_{L/F}: U_L\lra U_F)$ such that $\bar z=\xi_0.\bar\xi_1^{-1}$. Taking $\xi=z.\xi_1$, we get
  \[
  N_{L/F}(\xi)=N_{L/F}(\xi_1)N_{L/F}(z)=\theta\cdot 1=\theta
  \]and $\bar\xi=\bar z\cdot\bar\xi_1=\xi_0$ in $L_0$. This completes the proof.
\end{proof}

The proof of the lemma below is long. It will be postponed to the next section (see \eqref{4p3}).

\begin{lemma}\label{3p5}
  With notation and hypotheses as in $\eqref{3p1}$, suppose that $s=v_F(\lambda)=rp$ and that $k$ is a global function field.

  Then there exists a separable field extension $L_0/k$ of degree $p$ and an element $\xi_0\in L_0$ such that the following hold:

  \begin{enumerate}
    \item $N_{L_0/k}(\xi_0)=\bar\theta\in k$, and $r\bar\al'_{L_0}=(E_0L_0/L_0,\,\xi_0)\in\Br(L_0)$.
    \item If $\bar\al'_{E_0}=0$, then $L_0\subseteq E_0$ (up to $k$-isomorphism).
    \item If $\bar\al'_{E_0}\neq 0$ and $L_0\not\subseteq E_0$, then $\ind(\bar\al'_{E_0L_0})<\ind(\bar\al'_{E_0})$.
  \end{enumerate}
\end{lemma}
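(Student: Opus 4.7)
My plan is a case split on whether $r\bar\al'_{E_0}=0$, guided by the following structural constraint. Any pair $(L_0,\xi_0)$ satisfying (1) must, by corestricting $r\bar\al'_{L_0}=(E_0L_0/L_0,\xi_0)$ from $L_0$ to $k$ and applying the projection formula for $\chi_0\cup(\cdot)$, satisfy $rp\bar\al'=(E_0/k,N_{L_0/k}(\xi_0))$; combined with the given $rp\bar\al'=(E_0/k,\bar\theta)$, this forces $N_{L_0/k}(\xi_0)\equiv\bar\theta\pmod{N_{E_0/k}(E_0^*)}$. So for each $L_0$ I construct, my strategy is first to exhibit some $\xi_0'\in L_0^*$ with $r\bar\al'_{L_0}=(E_0L_0/L_0,\xi_0')$, and then to multiply $\xi_0'$ by an element of $N_{E_0L_0/L_0}((E_0L_0)^*)$---which lies in the kernel of the cyclic-algebra symbol $(E_0L_0/L_0,\cdot)$---so as to correct its $L_0/k$-norm to $\bar\theta$ exactly.

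If $r\bar\al'_{E_0}=0$, I would take $L_0$ to be the unique degree-$p$ subfield of the cyclic $p$-power extension $E_0/k$; this is available whenever $E_0\ne k$, and when $\bar\al'=0$ the hypothesis $\al\ne 0$ together with $\al=\al'+(E/F,\pi)$ ($\al'$ unramified) forces $E\ne F$, hence $E_0\ne k$. Then $E_0L_0=E_0$ splits $r\bar\al'_{L_0}$, an initial $\xi_0'$ exists, and the correction step produces $\xi_0$; conditions (2) and (3) hold by the choice $L_0\subseteq E_0$. The corner case $E_0=k$ with $\bar\al'\ne 0$ and $r\bar\al'=0$ reduces to finding a degree-$p$ extension $L_0/k$ with $\bar\theta\in N_{L_0/k}(L_0^*)$, which I expect to follow from the Hasse norm theorem for cyclic extensions of global function fields.

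The main case, and the principal obstacle, is $r\bar\al'_{E_0}\ne 0$. Over the global function field $E_0$, Brauer--Hasse--Noether gives $\per=\ind$ for Brauer classes; setting $p^t:=\ind(\bar\al'_{E_0})$, the constraints $rp\bar\al'_{E_0}=0$ and $r\bar\al'_{E_0}\ne 0$ force $v_p(r)=t-1$, so $\gcd(r,p^t)=p^{t-1}$. I aim to construct $L_0/k$ separable of degree $p$, with $L_0\not\subseteq E_0$, such that $E_0L_0$ splits $r\bar\al'_{E_0}$. The class $r\bar\al'_{E_0}$ is $p$-torsion over a global function field and hence determined by finitely many nonzero local invariants, so any degree-$p$ extension $M/E_0$ that is nonsplit at each ramification place of $r\bar\al'_{E_0}$ kills the class, by the local invariant formula $\mathrm{inv}_{w'}=p\cdot\mathrm{inv}_w$ at a local extension of degree $p$. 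I would obtain such $M$ as $E_0L_0$ by taking $L_0$ an Artin--Schreier extension $k(y)/k$ with $y^p-y=t$, choosing $t\in k$ via weak approximation at the finite set of places of $k$ lying below the bad places, so that $L_0$ completes to a degree-$p$ field at each such place and is not contained in the corresponding completion of $E_0$, while also globally arranging $L_0\not\subseteq E_0$. Condition (3) is then automatic since $r\bar\al'_{E_0L_0}=0$ combined with $\per=\ind$ over $E_0L_0$ gives $\ind(\bar\al'_{E_0L_0})\mid\gcd(r,p^t)=p^{t-1}<p^t$. The main technical difficulty, and where I expect the proof to require the most care, is that the norm correction for $\xi_0$ amounts to trivializing a class in $E_0^*/(N_{E_0L_0/E_0}((E_0L_0)^*)\cdot\ker N_{E_0/k})$; arranging this to vanish simultaneously with the prescribed local-degree conditions on $E_0L_0$ is the delicate matter the approximation argument must encode.
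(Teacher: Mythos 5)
Your case structure and the numerical mechanism for index reduction are essentially right: the identity $v_p(r)=t-1$ (from $rp\bar\al'_{E_0}=0$, $r\bar\al'_{E_0}\neq 0$) together with $\per=\ind$ over global fields is exactly how the paper gets $\ind(\bar\al'_{E_0L_0})\le p^{t-1}<p^t$, and your treatment of the case $r\bar\al'_{E_0}=0$, $E_0\neq k$ (take $L_0\subseteq E_0$ of degree $p$ and correct $\xi_0'$ by a norm from $E_0=E_0L_0$) is the paper's Case 1 verbatim. But there are two genuine gaps.

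First, in the corner case $E_0=k$, $\bar\al'\neq 0$, $r\bar\al'=0$, you reduce to ``find any degree-$p$ extension $L_0/k$ with $\bar\theta\in N_{L_0/k}(L_0^*)$.'' That is not enough: since $L_0\not\subseteq E_0=k$ automatically, condition (3) of the lemma still requires $\ind(\bar\al'_{L_0})<\ind(\bar\al')$, and an arbitrary degree-$p$ extension admitting $\bar\theta$ as a norm need not reduce the index of $\bar\al'$. One must choose $L_0$ so that locally at every place where $\bar\al'$ is nontrivial, $L_0\otimes_k k_v$ is a field (hence divides the local index by $p$); the paper does this place by place (Lemma 4.1 when $p\nmid v(\bar\theta)$, the unramified extension otherwise) and then glues.

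Second, and more seriously, in the main case you construct $L_0$ by weak approximation so that $r\bar\al'_{E_0L_0}=0$, then propose to find $\xi_0'$ with $(E_0L_0/L_0,\xi_0')=r\bar\al'_{L_0}$ and correct its norm to $\bar\theta$. The correction requires $\bar\theta/N_{L_0/k}(\xi_0')\in N_{E_0L_0/k}\bigl((E_0L_0)^*\bigr)$ — a priori you only know it lies in $N_{E_0/k}(E_0^*)$ — and you explicitly leave this unresolved. This is not a technicality one can wave at: it is the crux of the lemma. The paper's solution (its Lemma 4.2, the characteristic-$p$ analogue of Parimala–Preeti–Suresh, Lemma 3.1) inverts the order of construction: it first produces local pairs $(L_v,\xi_v)$ at all bad places satisfying the norm, Brauer, and index conditions locally, then builds $\xi$ globally as a root of a degree-$p$ polynomial with constant term exactly $(-1)^p\theta$ (so $N_{L/k}(\xi)=\theta$ holds on the nose) whose remaining coefficients approximate the minimal polynomials of the $\xi_v$; the global Brauer identity is then verified by a local–global argument using an auxiliary split place $v_0$ from Chebotarev. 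Without this simultaneous construction — or a proof that your obstruction class vanishes — the argument does not close.
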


\newpara\label{3p6} {\bf Proof of Theorem\;\ref{1p1}}. Let us prove our main theorem assuming Lemma\;\ref{3p5}. Let $\al$ be the Brauer class of the division algebra $D$ in Theorem\;\ref{1p1}. As we have said in the introduction, it suffices to consider the case where $\al$ is tamely ramified, and of course we may assume $\al\neq 0$. The previous discussions in this section reduce the problem to the construction of an inductive pair in the case $s=v_F(\lambda)=rp$.

Let $L_0/k$ and $\xi_0\in L_0$ be chosen as in Lemma\;\ref{3p5}. Let $L/F$ be the unramified extension with residue field $L_0/k$. By Lemma\;\ref{3p4}, there exists $\xi\in U_L$ such that $\bar\xi=\xi_0\in L_0$, $N_{L/F}(\xi)=\theta$ and the element $\mu:=\xi.(-\pi)^r$ satisfies
\[
\al_L\cup(\mu)=0\in H^3(L)\quad\text{and}\quad N_{L/F}(\mu)=\theta.(-\pi)^{rp}=\lambda\,.
\]It remains to show $\ind(\al_L)<\ind(\al)$.

We distinguish three cases.

\noindent {\bf Case 1.} $\bar\al'_{E_0}=0$.

In this case we have $L_0\subseteq E_)$ by condition (2) in Lemma\;\ref{3p5}. Thus, $L\subseteq E$. Moreover, $\bar\al'_{E_0}$ implies $\al'_E=0$. Hence $\al'=(E/F,\,u)$ for some $u\in E^*$ and since $\al'$ is unramified, we have $u\in U_E$. Now $\al=\al'+(E/F,\,\pi)=(E/F,\,u\pi)$. Therefore,
\[
\al_L=(E/F,\,u\pi)_L=(EL/L\,,\,u\pi)=(E/L\,,\,u\pi)\,.
\]It follows that
\[
\ind(\al_L)=[E:L]<[E:F]=\ind(\al)\,.
\]

\noindent {\bf Case 2.} $\bar\al'_{E_0}\neq 0$ and $L_0\not\subseteq E_0$.

In this case we have $E\cap L=F$ and $[EL:L]=[E:F]$, because $[L:F]$ is a prime and $L\not\subseteq E$. Applying the index formula in \cite[Thm.\;5.15 (a)]{JW90} to the decompositions of $\al$ and of $\al_L=\al'_L+(EL/L,\,\pi)$, we obtain
\[
\begin{split}
  \ind(\al_L)&=\ind(\al'_{EL})[EL:L]=\ind(\bar\al'_{E_0L_0})[E:F]\\
  &<\ind(\bar\al'_{E_0})[E:F]=\ind(\al)\,,
\end{split}
\]the inequality here following from condition (3) of \eqref{3p5}.

\noindent {\bf Case 3.} $\bar\al'_{E_0}\neq 0$ and $L_0\subseteq E_0$.

Now we have $L\subseteq E$ and $\al_L=\al'_L+(E/L,\,\pi)$. Using the index formula \cite[Thm.\;5.15 (a)]{JW90} once again, we can deduce
\[
\ind(\al_L)=\ind(\al'_E)[E:L]<\ind(\al'_E)[E:F]=\ind(\al)\;.
\]So, the inequality $\ind(\al_L)<\ind(\al)$ holds in all cases. This shows that $(L,\,\mu)$ is an inductive pair, and the theorem is thus proved.

\section{Proofs of technical lemmas}\label{sec4}

Our goal in this section is to prove Lemma\;\ref{3p5}.

 We begin with the following observation.

\begin{lemma}\label{4p1}
  Let $K$ be a field of characteristic $p>0$ and $\gamma\in K^*$. Suppose that $p\nmid v(\gamma)$ for some normalized discrete valuation $v$ on $K$.

  Then there exists a cyclic extension $L/K$ of degree $p$ such that $\gamma\in N_{L/K}(L^*)$.
\end{lemma}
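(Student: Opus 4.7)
The plan is to give a direct, one-step Artin--Schreier construction. Every cyclic extension of degree $p$ of a field of characteristic $p$ has the form $L = K(\alpha)$ with $\alpha^p - \alpha = a$ for some $a \in K$, and such an $L/K$ has degree $p$ exactly when $a \notin \wp(K)$, where $\wp(x) := x^p - x$. So it suffices to exhibit an $a$ with $\gamma \in N_{L/K}(L^*)$, and the natural candidate is $a = \gamma$ itself (up to inversion).

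First, since the norm is multiplicative, $\gamma \in N_{L/K}(L^*)$ if and only if $\gamma^{-1} \in N_{L/K}(L^*)$. The hypothesis $p \nmid v(\gamma)$ is preserved under $\gamma \mapsto \gamma^{-1}$, so I may assume $v(\gamma) < 0$.

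Next, I would set $L = K(\alpha)$ with $\alpha^p - \alpha = \gamma$, and check $\gamma \notin \wp(K)$ by a short valuation argument: if $\gamma = x^p - x$ for some $x \in K$, then either $v(x) \ge 0$, forcing $v(\gamma) \ge 0$ (contradicting $v(\gamma)<0$), or $v(x) < 0$, forcing $v(x^p - x) = p\,v(x)$ and hence $p \mid v(\gamma)$ (contradicting the hypothesis). Hence $L/K$ is cyclic of degree $p$.

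Finally, the conjugates of $\alpha$ over $K$ are $\alpha, \alpha+1, \dots, \alpha+p-1$, so $N_{L/K}(\alpha)$ equals $(-1)^p$ times the constant term of $X^p - X - \gamma$, namely $(-1)^{p+1}\gamma$. Since $(-1)^{p+1} = 1$ in every characteristic $p$ (for $p$ odd by sign, for $p = 2$ because $-1 = 1$), we get $N_{L/K}(\alpha) = \gamma$, as desired.

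There is really no hard step here: the whole content of the lemma is the observation that the valuation hypothesis $p \nmid v(\gamma)$ is exactly what prevents $\gamma$ from lying in $\wp(K)$, so that $\gamma$ (or $\gamma^{-1}$) can itself play the role of the Artin--Schreier parameter; both the irreducibility and the norm computation are then immediate.
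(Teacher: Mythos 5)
Your proof is correct, and it reaches the conclusion by a more economical route than the paper, although both arguments live inside Artin--Schreier theory. The paper first renormalizes: choosing $c,d$ with $pc+dv(\gamma)=1$ it replaces $\gamma$ by the uniformizer $\rho=\gamma^d\varpi^{pc}$ (legitimate because $K^*/N_{L/K}(L^*)$ is killed by $p$ and $p\nmid d$), then takes the Artin--Schreier parameter to be $a=\rho^{1-p}(1+\rho+\cdots+\rho^{p-2})$ and checks that $\rho$ is a norm by exhibiting an explicit zero of the norm form of the cyclic $p$-algebra $[a,\rho)$; the valuation $v(a)=1-p$, which is negative and prime to $p$, then guarantees $a\notin\wp(K)$. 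You instead replace $\gamma$ by $\gamma^{-1}$ if necessary so that $v(\gamma)<0$ (using that $N_{L/K}(L^*)$ is a subgroup and that $p\nmid v(\gamma)$ forces $v(\gamma)\neq 0$), take the Artin--Schreier parameter to be $\gamma$ itself, and get the norm computation for free: the conjugates of a root $\alpha$ of $X^p-X-\gamma$ are $\alpha+i$ for $i\in\mathbb{F}_p$, so $N_{L/K}(\alpha)=\prod_{i\in\mathbb{F}_p}(\alpha+i)=\alpha^p-\alpha=\gamma$ (your sign bookkeeping via the constant term gives the same answer, since $(-1)^{p+1}=1$ in characteristic $p$). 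Your valuation argument that $\gamma\notin\wp(K)$ --- either $v(x)\ge 0$ forces $v(x^p-x)\ge 0$, or $v(x)<0$ forces $v(x^p-x)=pv(x)$ --- is exactly the mechanism the paper applies to its own parameter $a$, so nothing new is needed there. In short, your choice of parameter eliminates both the renormalization step and the appeal to the splitting criterion for cyclic $p$-algebras, and since Lemma 4.2 only uses the statement of this lemma (not the particular extension constructed), the simplification is harmless downstream.
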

\begin{proof}
  Choose $c,\,d\in\Z$ such that $pc+dv(\gamma)=1$ and put $\rho=\gamma^d\varpi^{pc}$, where $\varpi\in K$ is a uniformizer for $v$. Then $\rho$ is another uniformizer for $v$. If $L/K$ is a cyclic degree $p$ extension such that $\rho\in N_{L/K}(L^*)$, then $\rho^d=\rho\varpi^{-pc}\in N_{L/K}(L^*)$ and this implies $\gamma\in N_{L/K}(L^*)$, since the group $K^*/N_{L/K}(L^*)$ is $p$-torsion and $p\nmid d$. Therefore, by replacing $\gamma$ with $\rho$ if necessary, we may assume $v(\gamma)=1$.

  We shall construct $L/K$ as an Artin--Schrier extension $L=K[X]/(X^p-X-\al)$, where $\al\in K$ and $\al\notin\wp(K):=\{a^p-a\,|\,a\in K\}$. We denote by $[\al,\,\lambda)$ the Brauer class of the cyclic $p$-algebra generated by two symbols $x,\,y$ over $K$ subject to the relations
  \[
  x^p-x=\al\,,\;y^p=\lambda\,,\;xy=y(x+1)\,.
  \]Then we have
  \begin{equation}\label{4p1p1}
    \begin{split}
    \lambda\in N_{L/K}(L^*)&\iff [\al,\,\lambda)=0\;\in\;\Br(K)\\
    &\iff \text{the polynomial equation } X^p_0-X_0-\al+\sum^{p-1}_{i=1}X_i^p\lambda^i=0\\
    &\;\;\;\text{ has a solution } (X_0,\dotsc, X_{p-1})\in K\times\cdots\times K\,
    \end{split}
  \end{equation}(for the second equivalence see e.g. \cite[p.234, Prop.\;2 (2)]{KatoKuzumaki} or \cite{ArasonBaeza2010}). We choose
  \[
  \al=\lambda^{-p}(\lambda+\lambda^2+\cdots+\lambda^{p-1})=\lambda^{1-p}(1+\lambda+\cdots+\lambda^{p-2})\,.
  \]
  Then the last condition in \eqref{4p1p1} holds for
  \[
  (X_0,\,X_1,\dotsc, X_{p-1})=(0,\,\lambda^{-1},\cdots,\lambda^{-1})\,.
  \]Moreover, we have $v(\al)=1-p<0$ and $p\nmid v(\al)$. This implies that $X^p-X-\al=0$ has no solution in $K$. Hence $\al\notin \wp(K)$ and
  $L:=K[X]/(X^p-X-\al)$ is a cyclic extension with the required property.
\end{proof}

Now we prove a characteristic $p$ version of \cite[Lemma\;3.1]{ParimalaPreetiSuresh2018}.

\begin{lemma}\label{4p2}
  Suppose given the following data:

  \begin{itemize}
    \item[$(\mathrm{i})$] A global field $k$ of characteristic $p>0$ and a cyclic extension $K/k$ of $p$-power degree.
    \item[$(\mathrm{ii})$]  A Brauer class $\beta\in \Br(k)[p^\infty]$, an integer $r\in\Z$ and an element $\theta\in k^*$ such that
    \[
    rp\beta=(K/k,\,\theta)\,.
    \]
    \item[$(\mathrm{iii})$] An integer $d\ge 2$.
      \item[$(\mathrm{iv})$] A finite set $S$ of places of $k$ including all the places $v$ such that $\beta\otimes k_v\neq 0\in \Br(k_v)$.
        \item[$(\mathrm{v})$] For each $v\in S$, a (connected or split) $(\Z/p\Z)$-Galois cover $L_v$ of $k_v$ and an element $\xi_v\in L_v^*$ such that
       \begin{enumerate}
         \item[(a)] $N_{L_v/k_v}(\xi_v)=\theta$;
         \item[(b)] $r\beta\otimes L_v=(KL_v/L_v,\,\xi_v)\in \Br(L_v)$;
         \item[(c)] $\ind(\beta\otimes KL_v)<d$.
       \end{enumerate}
    Here, by a $(\Z/p\Z)$-Galois cover $L_v/k_v$ we mean that $L_v$ is either a cyclic field extension of degree $p$ over $k_v$ or the direct product of $p$ copies of $k_v$. In the latter case, we identity $\Br(L_v)=\prod_{1\le i\le p}\Br(k_v)$ and let $(KL_v/L_v,\,\xi_v)$ denote
    $(Kk_v/k_v,\,\xi_{v,\,i})\in \prod_{1\le i\le p}\Br(k_v)$, where $(\xi_{v,\,1},\dotsc, \xi_{v,\,p})=\xi_v\in L_v=\prod_{1\le i\le p}k_v$.
  \end{itemize}

Then, there exists a separable field extension $L/k$ of degree $p$ and an element $\xi\in L^*$ such that

\begin{enumerate}
  \item $N_{L/k}(\xi)=\theta$;
  \item $r\beta\otimes L=(KL/L\,,\,\xi)\in \Br(L)$;
  \item $\ind(\al_L)<d$;
  \item $L\otimes_kk_v\cong L_v$ for all $v\in S$;
  \item $\xi$ is sufficiently close to $\xi_v$ in $L_v$ in the $v$-adic topology, for all $v\in S$.
\end{enumerate}
\end{lemma}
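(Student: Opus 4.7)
The plan is to adapt the proof of \cite[Lemma 3.1]{ParimalaPreetiSuresh2018}, substituting Artin--Schreier theory for Kummer theory in the construction of the cyclic degree-$p$ extension $L/k$, and checking that the remaining tools (weak and strong approximation, Brauer--Hasse--Noether, the Hasse norm theorem, and the projection formula) apply equally well in characteristic $p$. The construction proceeds in two stages: first build $L/k$, then build $\xi\in L^*$.

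Construction of $L/k$. Every $L_v$ has the form $k_v[X]/(X^p-X-\alpha_v)$ for some $\alpha_v\in k_v$ (with $\alpha_v\in\wp(k_v)$, where $\wp(t):=t^p-t$, iff $L_v$ is split). By weak approximation on the additive group of $k$, I choose $\alpha\in k$ very close to $\alpha_v$ at each $v\in S$; then $L:=k[X]/(X^p-X-\alpha)$ satisfies $L\otimes_k k_v\cong L_v$ by continuity of the Artin--Schreier correspondence. To ensure $L$ is a field I add an auxiliary place $v_0\notin S$ and use Lemma \ref{4p1} to prescribe a nontrivial ramified Artin--Schreier class at $v_0$, which rules out $\alpha\in\wp(k)$. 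Condition (3) of the lemma is then automatic from condition (4): by Brauer--Hasse--Noether over the global function field $KL$, $\ind(\beta\otimes KL)=\mathrm{lcm}_w\,\ind(\beta\otimes KL_w)$; the local indices above $v\in S$ are strictly less than $d$ by hypothesis (v)(c), and above $v\notin S$ the class $\beta$ is trivial, so the local index is $1$.

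Construction of $\xi$. I use weak approximation in $L^*$ to pick a preliminary $\eta\in L^*$ very close to $\xi_v$ at each $v\in S$; by continuity of the cup product, $(KL/L,\eta)=r\beta\otimes L$ locally at every $v\in S$. The obstruction class $\gamma:=(KL/L,\eta)-r\beta\otimes L\in\Br(L)[p^\infty]$ already has vanishing local invariants at every $v\in S$ and at every place of $L$ above a $v\notin S$ at which $KL/L$ is unramified and $\eta$ is a local unit, i.e.\ at all but finitely many places. A further small modification of $\eta$ at the remaining finitely many bad places, via strong approximation in $L^*$ so as to preserve the approximation at $S$, makes $\gamma$ locally trivial everywhere; by Brauer--Hasse--Noether it is then globally trivial. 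Finally I multiply $\eta$ by a correcting factor $\delta\in N_{KL/L}(KL^*)$---which preserves the Brauer identity because it lies in the kernel of $u\mapsto(KL/L,u)$---to enforce $N_{L/k}(\eta\delta)=\theta$. Feasibility of this last step follows from the projection formula $\Cor_{L/k}(KL/L,\eta)=(K/k,N_{L/k}(\eta))$ together with $rp\beta=(K/k,\theta)$, forcing $\theta/N_{L/k}(\eta)\in N_{K/k}(K^*)$, combined with the Hasse norm theorem for the cyclic extension $K/k$ and the tower identity $N_{L/k}\circ N_{KL/L}=N_{K/k}\circ N_{KL/K}$.

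The main obstacle is coordinating all the corrections in Stage 2 so that the Brauer identity, the norm equation, and the local approximation at $S$ hold simultaneously. The key global tools are Brauer--Hasse--Noether and the Hasse norm theorem (both valid in any characteristic), weak and strong approximation in the global function field $L$, and the projection formula for corestrictions. The characteristic-$p$ subtleties are confined to Stage 1, where Lemma \ref{4p1} ensures that the Artin--Schreier construction yields a degree-$p$ field.
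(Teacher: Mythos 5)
Your Stage 1 is fine and close in spirit to the paper: the paper also builds $L$ by approximation (it approximates the minimal polynomials of the $\xi_v$ rather than Artin--Schreier parameters, and it enlarges $S$ so as to contain all $v$ with $v(\theta)\neq 0$, all $v$ where $K/k$ ramifies, and at least one place where $L_v$ is a field, the last being supplied by Lemma\;\ref{4p1} \emph{together with} a choice of $\xi_{v_0}$ so that $\theta$ remains a local norm there --- a point you should not omit, since a global norm must be a local norm everywhere, including at your auxiliary place $v_0$). The real divergence, and the real problem, is Stage 2. The paper constructs $\xi$ \emph{simultaneously} with $L$, as the root of a monic degree-$p$ polynomial whose constant coefficient is forced to be $(-1)^p\theta$; the identity $N_{L/k}(\xi)=\theta$ then holds exactly by construction, and only the Brauer condition (2) needs a local-global verification. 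You instead decouple the two, first arranging (2) and (5) for a preliminary $\eta$ and then trying to repair the norm at the end.

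That last repair step is a genuine gap. From $(KL/L,\eta)=r\beta_L$ and the projection formula you do get $(K/k,\,\theta/N_{L/k}(\eta))=0$, hence $c:=\theta/N_{L/k}(\eta)\in N_{K/k}(K^*)$. But what you need is $\delta\in N_{KL/L}(KL^*)$ with $N_{L/k}(\delta)=c$, i.e.\ $c\in N_{L/k}(N_{KL/L}(KL^*))=N_{KL/k}(KL^*)$, which is in general a strictly smaller group than $N_{K/k}(K^*)$; the tower identity $N_{L/k}\circ N_{KL/L}=N_{K/k}\circ N_{KL/K}$ only shows containment in $N_{K/k}(K^*)$, not the reverse. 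Writing $c=N_{K/k}(w)$, you would still have to show that $w$ can be modified by an element of $\ker(N_{K/k})$ into $N_{KL/K}(KL^*)$, and nothing in your argument addresses this. On top of that, $\delta$ must be taken close to $1$ at all places of $S$ (otherwise conclusion (5), and the identity $L\otimes_k k_v=k_v(\xi_v)$ used to pin down (4), are destroyed), which is a further local constraint you do not verify. Note also that conclusion (1) forces $\theta$ to be a local norm from $L_w/k_v$ at \emph{every} place, including the finitely many $v\notin S$ where $L/k$ ramifies or $v(\theta)\neq 0$; in your scheme nothing guarantees this, whereas in the paper it is automatic because $\theta$ is literally $(-1)^p f(0)=N_{L/k}(\xi)$. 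I would recommend abandoning the two-stage repair and following the paper: first use \cite[Lemma\;2.2]{ParimalaPreetiSuresh2018} to replace $\xi_v$ by $\xi_v\rho_v$ with $L_v=k_v(\xi_v\rho_v)$ and $\rho_v$ a norm from $KL_v/L_v$, then approximate the resulting minimal polynomials by a global monic $f$ with $f(0)=(-1)^p\theta$; the norm identity is then exact and only condition (2) requires the Brauer--Hasse--Noether argument you sketch.
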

\begin{proof}
  First of all, we may further assume $S$ contains the following sets of places:

  \begin{itemize}
    \item $S_0:=\{v\,|\,v(\theta)\neq 0\}$;
    \item $S_1:=\{v\,|\, K/k \text{ is ramified at } v\}$;
    \item at least one place $v$ such that $L_v/k_v$ is a (cyclic) field extension.

    In fact, for any $v\notin S$, we have $\beta\otimes k_v=0$ by assumption (iv), whence $\ind(\beta\otimes KL_v)=\ind(\beta\otimes Kk_v)=1<2\le d$. On the other hand, Lemma\;\ref{4p1} tells us that there is a cyclic field extension $L_v/k_v$ of degree $p$ such that $\theta\in N_{L_v/k_v}(L_v^*)$. We choose any $\xi_v\in L_v^*$ such that $\theta=N_{L_v/k_v}(\xi_v)$. Then
    \[
    \mathrm{Cor}_{L_v/k_v}\left((KL_v/L_v,\,\xi_v)\right)=(Kk_v/k_v\,,\,N_{L_v/k_v}(\xi_v))=(Kk_v/k_v,\,\theta)=rp\beta\otimes k_v=0
    \]by assumption (ii). Since the corestriction map $\mathrm{Cor}: \Br(L_v)\to \Br(k_v)$ is injective (in fact an isomorphism) by local class field theory, we have $(KL_v/L_v,\,\xi_v)=0=r\beta\otimes L_v$. This means that condition (v) can still be satisfied when we add a new place $v$ for which $L_v$ is a field.
  \end{itemize}

  Now consider an arbitrary $v\in S$. By \cite[Lemma\;2.2]{ParimalaPreetiSuresh2018}, there exists $\rho_v\in L_v$ sufficiently close to $1$ such that $N_{L_v/k_v}(\rho_v)=1$ and $L_v=k_v(\xi_v\rho_v)$. We may assume $\rho_v$ is a norm from $KL_v/L_v$, by the fact that for any finite abelian extension $M/H$ of local fields, the norm subgroup $N_{M/H}(M^*)$ is open in $H^*$, even in the characteristic $p$ case (see e.g. \cite[p.219, $\S$XIV.6, Cor.\;1]{SerreLocalFields}).  Thus, it follows that $r\beta\otimes L_v=(KL_v/L_v,\,\xi_v)=(KL_v/L_v,\,\rho_v\xi_v)$. Replacing $\xi_v$ with $\xi_v\rho_v$ if necessary, we may assume $L_v=k_v(\xi_v)$.

  As in the proof of \cite[Lemma\;3.1]{ParimalaPreetiSuresh2018}, by using Chebotarev's density theorem and the strong approximation theorem, we can choose a place $v_0$ outsider $S$ such that $K/k$ splits completely at $v_0$ and we can find a monic polynomial
  \[
  f(X)=X^p+b_{p-1}X^{p-1}+\cdots+(-1)^p\theta
  \]with constant coefficient $f(0)=(-1)^p\theta$, which is close to the minimal polynomial of $\xi_v$ over $k_v$ for each $v\in S$. Then
  $L:=k[X]/(f)$ is a field extension of $k$, since we assumed that $L\otimes_kk_v=L_v$ is a field for at least one $v\in S$. The canonical image $\xi$ of $X$ in $L$ is an element with $N_{L/k}(\xi)=\theta$.

  The work of checking that the pair $(L,\,\xi)$ has all the required properties can be done in the same way as in \cite[p.416]{ParimalaPreetiSuresh2018}. The following fact will be used once again in this verification: For any finite abelian extension of local fields, any element close to 1 is a norm.
\end{proof}

\newpara\label{4p3} {\bf Proof of Lemma\;\ref{3p5}}. Now we can prove Lemma\;\ref{3p5}, which is essential in our proof of Theorem\;\ref{1p1}. Only three cases require consideration.

Case 1. $r\bar\al'_{E_0}=0$ and $E_0\neq k$.

Case 2. $r\bar\al'_{E_0}=0$ and $E_0=k$.

Case 3. $r\bar\al'_{E_0}\neq 0$.

In Case 1, we choose $L_0/k$ to be the unique degree $p$ subextension of the cyclic extension $E_0/k$. So there is no need to check \eqref{3p5} (3) and condition (2) of the lemma holds by the choice of $L_0$. It remains to show that one can find an element $\xi_0\in L_0$ satisfying
\[
N_{L_0/k}(\xi_0)=\bar\theta\in k\quad\text{and}\quad r\bar\al'_{L_0}=(E_0L_0/L_0,\,\xi_0)=(E_0/L_0,\,\xi_0)\,.
\]Indeed, from the assumption $r\bar\al'_{E_0}=0$ we know that
\[
r\bar\al'=(E_0/k,\,x)\quad\text{for some } x\in k^*\,.
\]Hence $(E_0/k,\,x^p)=rp\bar\al'=(E_0/k,\,\theta)$, which implies $\bar\theta x^{-p}=N_{E_0/k}(y)$ for some $y\in E_0$. Let $\xi_0=N_{E_0/L_0}(y)x$. Then
\[
N_{L_0/k}(\xi_0)=N_{E_0/k}(y)x^p=\bar x^{-p}.x^p=\bar\theta\,,
\]and
\[
\begin{split}
  r\bar\al'_{L_0}&=(E_0/k,\,x)_{L_0}=(E_0/L_0,\,x)=(E_0/L_0,\,\xi_0N_{E_0/L_0}(y)^{-1})\\
  &=(E_0/L_0\,,\,\xi_0)\;\in\;\Br(L_0)\,.
\end{split}
\]This completes the proof in Case 1.

Now consider Case 2, where $E_0=k$ and $r\bar\al'=r\al'_{E_0}=0$.

In this case we have $\al'=\al\neq 0$ and $\bar\al'_{E_0}=\bar\al'\neq 0$. So \eqref{3p5} (2) is needless to check. To prove the desired result, we shall apply Lemma\;\ref{4p2} with
\[
\beta=\bar\al'\,,\; K=E_0\quad\text{and}\quad d=\ind(\bar\al'_{E_0})
\]and we will take the pair $(L,\,\xi)$ obtained in \eqref{4p2} to be the pair $(L_0,\,\xi_0)$ asserted in \eqref{3p5}.

Note that conditions (1) in \eqref{3p5} is guaranteed by assertions (1) and (2) in Lemma\;\ref{4p2}, and \eqref{3p5} (3) is part of \eqref{4p2} (3). The only remaining job is to take $S$ to be the finite set of places $v$ such that $\bar\al'\otimes k_v\neq 0$ and to provide for each $v\in S$ a pair $(L_v,\,\xi_v)$ satisfying assumption (v) in \eqref{4p2}.

If $p\nmid v(\bar\theta)$, we can find a cyclic extension $L_v/k_v$ of degree $p$ and $\xi_v\in L_v^*$ such that $\bar\theta=N_{L_v/k_v}(\xi_v)$, whence condition (a) in \eqref{4p2}  (v). Our assumption in the present case says $E_0=k$ and $r\beta=r\bar\al'_{E_0}=0$. So condition (b) in \eqref{4p2} (v) holds as well. For condition (c), it suffices to observe that
\[
\begin{split}
  \ind(\beta\otimes KL_v)&=\ind(\bar\al'\otimes E_0L_v)=\ind(\bar\al'\otimes L_v)\\
  &=\frac{\ind(\bar\al'\otimes k_v)}{[L_v:k_v]}<\ind(\bar\al'\otimes k_v)\le \ind(\bar\al')=d\,,
\end{split}
\]
since $k_v$ is a local field.

Next assume $p\,|\,v(\bar\theta)$. We now choose $L_v/k_v$ to be the unique unramified extension of degree $p$. Then $\bar\theta$ is a norm from $L_v/k_v$. Conditions (b) and (c) can be checked as in the previous paragraph.

Case 2 is thus solved.

In the third case, $r\bar\al'_{E_0}\neq 0$. As in Case 2, we will use \eqref{4p2} by taking $\beta=\bar\al'$, $K=E_0$ and $d=\ind(\bar\al'_{E_0})$. Again, it is sufficient to construct, for each place $v$ such that $\bar\al'\otimes k_v\neq 0$, a pair $(L_v,\,\xi_v)$ satisfying \eqref{4p2} (v).

Indeed, as in Case 2 we can find a cyclic extension $L_v/k_v$ and an element $\xi_v\in L_v$ satisfying condition (a) in \eqref{4p2} (v). Condition (b) can be deduced from (a) by using the corestriction map $\mathrm{Cor}: \Br(L_v)\to \Br(k_v)$ and its injectivity. So it remains to check condition (c) of \eqref{4p2} (v).

Note that $E_0k_v\cap L_v$ is either $k_v$ or $L_v$.

If $E_0k_v\cap L_v=k_v$, we have $[E_0L_v:E_0k_v]=p=[L_v:k_v]$ and
\[
\ind(\bar\al'\otimes E_0L_v)<
\begin{cases}
2\le d=\ind(\bar\al'_{E_0})\,, & \text{ if } \bar\al'\otimes E_0k_v=0\\
\ind(\bar\al'\otimes E_0k_v)\le d=\ind(\bar\al'_{E_0})\,, & \text{ if }\bar\al'\otimes E_0k_v\neq 0
\end{cases}
\] since $E_0k_v$ is a local field.

If $E_0k_v\cap L_v=L_v$, then $E_0k_v=E_0L_v$. Writing $rp=mp^t$ with $m$ coprime to $p$ and $t\ge 1$, we have
\begin{equation}\label{4p3p1}
  \begin{split}
    \ind(\bar\al'\otimes E_0L_v)&=\ind(\bar\al'\otimes E_0k_v)=\per(\bar\al'\otimes E_0k_v)=\frac{\per(\bar\al'\otimes k_v)}{[E_0k_v:k_v]}\\
    &\le p^t\frac{\per(p^t\bar\al'\otimes k_v)}{[E_0k_v:k_v]}=p^t\frac{\per(rp\bar\al'\otimes k_v)}{[E_0k_v:k_v]}\\
    &=p^t\frac{\per\left((E_0/k,\,\bar\theta)\otimes k_v\right)}{[E_0k_v:k_v]}=p^t\frac{\per\left((E_0k_v/k_v,\,\bar\theta)\right)}{[E_0k_v:k_v]}\\
    &=p^t\frac{\per\left((E_0L_v/k_v,\,\bar\theta)\right)}{[E_0L_v:k_v]}
  \end{split}
\end{equation}
Since
\[
\bar\theta^{[E_0L_v: L_v]}=N_{L_v/k_v}(\xi_v)^{[E_oL_v:L_v]}=N_{E_0L_v/k_v}(\xi_v)
\]we have
\[
\per\left((E_0L_v/k_v,\,\bar\theta)\right)\le [E_0L_v: L_v]\,.
\]Therefore, from \eqref{4p3p1} we deduce that
\[
\begin{split}
  \ind(\bar\al'\otimes E_0L_v)&\le   p^t\frac{\per\left((E_0L_v/k_v,\,\bar\theta)\right)}{[E_0L_v:k_v]}\\
     &\le p^t\frac{[E_0L_v: L_v]}{[E_0L_v:k_v]}=p^{t-1}<p^t\,.
\end{split}
\]
To finish the proof, it remains to check that $d=\ind(\bar\al'_{E_0})=p^t$. Indeed, from the fact that $mp^t\bar\al'=rp\bar\al'=(E_0/k,\,\bar\theta)$ we see that $d=\ind(\bar\al'_{E_0})=\per(\bar\al'_{E_0})$ divides $p^t$ (noticing that $E_0$ is a global field). On the other hand, our hypothesis in the present case (Case 3) is that $mp^{t-1}\bar\al'_{E_0}=r\bar\al'_{E_0}\neq 0$. Hence $d=\per(\bar\al'_{E_0})>p^{t-1}$. So we have $d=p^t$ as desired.

This completes the proof of Lemma\;\ref{3p5}.

\

\

\noindent \emph{Acknowledgements.} This work  is supported by a grant from the National Natural Science Foundation of China (Project No. 11801260). The author thanks Zhengyao Wu for helpful discussions.

\addcontentsline{toc}{section}{\textbf{References}}

\bibliographystyle{alpha}

\bibliography{RedNorm}

\

Contact information of the author:

\

Yong HU

\medskip

Department of Mathematics

Southern University of Science and Technology

No. 1088, Xueyuan Blvd., Nanshan district

518055 Shenzhen, Guangdong,

P.R. China

Email: huy@sustc.edu.cn

\

\clearpage \thispagestyle{empty}

\end{document}